\theoremstyle{plain}
\newtheorem{thm}{Theorem}[section]
\newtheorem{prop}[thm]{Proposition}
\newtheorem{lemma}[thm]{Lemma}
\newtheorem{cor}[thm]{Corollary}
\theoremstyle{remark}
\newtheorem{rmk}[thm]{Remark}
\theoremstyle{definition}
\newtheorem{defn}[thm]{Definition}
\newtheorem*{lemma*}{Lemma}
\renewcommand{\AA}{\mathbb{A}}
\newcommand{\CC}{\mathbb{C}}
\newcommand{\EE}{\mathbb{E}}
\newcommand{\FF}{\mathbb{F}}
\newcommand{\NN}{\mathbb{N}}
\newcommand{\QQ}{\mathbb{Q}}
\newcommand{\RR}{\mathbb{R}}
\newcommand{\ZZ}{\mathbb{Z}}
\newcommand{\kk}{\mathsf{k}}
\newcommand{\calO}{\mathcal{O}}
\newcommand{\calA}{\mathcal{A}}
\newcommand{\frakm}{\mathfrak{m}}
\newcommand{\calE}{\mathcal{E}}
\newcommand{\Tor}{\operatorname{Tor}}
\newcommand{\Ext}{\operatorname{Ext}}
\newcommand{\BPGL}{\mathsf{BPGL}}
\newcommand{\BP}{\mathsf{BP}}
\newcommand{\Spec}{\operatorname{Spec}}
\renewcommand{\top}{\operatorname{top}}
\newcommand{\Frac}{\operatorname{Frac}}
\newcommand{\MGL}{\mathsf{MGL}}
\newcommand{\KGL}{\mathsf{KGL}}
\newcommand{\kgl}{\mathsf{kgl}}
\newcommand{\MU}{\mathsf{MU}}
\newcommand{\comp}{\,\widehat{_2}}
\renewcommand{\top}{{\operatorname{top}}}
\newcommand{\MV}{\operatorname{\mathcal{M\kern -.1em V}}}
\newcommand{\braces}[1]{\left\{ #1\right\}}
\newcommand{\tensor}{\otimes}
\newcommand{\cotensor}{\mathbin{\Box}}
\newcommand{\smsh}{\wedge}
\newcommand{\coleq}{\mathrel{\mathop:}=}
\begin{document}

\bibliographystyle{amsalpha} 

\title{Motivic invariants of $p$-adic fields}
\author{Kyle M.~Ormsby}
\email{ormsby@math.mit.edu}
\keywords{motivic Adams spectral sequence, algebraic $K$-theory, algebraic cobordism.  MSC2010: 14F42, 19D50}
\thanks{This research was partially supported by NSF grant DMS-0602191.}

\begin{abstract}
We provide a complete analysis of the motivic Adams spectral sequences converging to the bigraded coefficients of the 2-complete algebraic Johnson-Wilson spectra $\BPGL\langle n\rangle$ over $p$-adic fields.  These spectra interpolate between integral motivic cohomology ($n=0$), a connective version of algebraic $K$-theory ($n=1$), and the algebraic Brown-Peterson spectrum ($n=\infty$).  We deduce that, over $p$-adic fields, the 2-complete $BPGL\langle n\rangle$ splits over 2-complete $\BPGL\langle 0\rangle$, implying that the slice spectral sequence for $BPGL$ collapses.

This is the first in a series of two papers investigating motivic invariants of $p$-adic fields, and it lays the groundwork for an understanding of the motivic Adams-Novikov spectral sequence over such base fields.
\end{abstract}

\maketitle

\setcounter{tocdepth}{1}
\tableofcontents

\section{Introduction}
\label{sec:intro}
This paper initiates a project to determine algebro-geometric invariants of $p$-adic fields via the methods of stable homotopy theory.  The technology for such an endeavor resides in the Morel-Voevodsky motivic homotopy theory \cite{MV}, and in the stabilizations thereof \cite{VoevICM, PoSMod, Jardine, DOR}.  The techniques here are natural generalizations of those used over an algebraically closed field in \cite{HKO}, but the phenomena observed are more nuanced because of the arithmetically richer input.

Presently, we will concern ourselves with the bigraded coefficients of $\BPGL\langle n\rangle\comp$ (the 2-complete $n$-th algebraic Johnson-Wilson spectrum at the prime 2, cf.~Definition \ref{defn:BPGLn})  over a $p$-adic field, $p>2$.  A sequel to this work will use these results to provide information about a motivic Adams-Novikov spectral sequence converging to stable motivic homotopy groups of the 2-complete sphere spectrum over a $p$-adic field \cite{motAlpha, thesis}. 
Our main computational tool in all cases is the motivic Adams spectral sequence.

Our grading conventions will follow those in \cite{HKO}, where the $(m+n\alpha)$-sphere $S^{m+n\alpha}$ is the smash product $(S^1)^{\smsh m}\smsh (\AA^1\smallsetminus 0)^{\smsh n}$.  The wildcard $\star$ will refer to bigradings of the form $m+n\alpha$, $m,n\in \ZZ$, and if $E$ is a motivic spectrum then its (bigraded) coefficients are $E_\star = \pi_\star E$.

In \S\ref{sec:comod}, we define and establish basic properties of $\BPGL\langle n\rangle$ and identify its mod 2 motivic homology as a comodule over the dual motivic Steenrod algebra $\calA_\star$.  In \S\ref{sec:Ext}, we run appropriate filtration spectral sequences that determine the $E_2$-terms of the motivic Adams spectral sequences converging to 2-complete coefficients.  We then analyze these spectral sequences in \S\ref{sec:motASS} in order to fully determine the bigraded coefficient rings $\pi_\star\BPGL\langle n\rangle\comp$.  The computation of $\pi_\star BPGL\comp$, combined with motivic Landweber exactness, permits a description of the $\BPGL\comp$ Hopf algebroid producing a computation of the $E_2$-term of the motivic Adams-Novikov spectral sequence over a $p$-adic field; see Theorem \ref{thm:ANSSE2}.

In order to follow this program, we use the rest of this introduction to review fundamental input from motivic cohomology. In \S\ref{sec:pAdic}, we describe our conventions for $p$-adic fields and review arithmetic input making explicit computations possible.

\subsection*{Motivic homology and the dual motivic Steenrod algebra over a field}
Let $k^M_*$ denote the mod 2 reduction of Milnor $K$-theory $K^M_*$; let $H$ denote the mod 2 motivic cohomology spectrum.  The main result of \cite{motCohom} determines the motivic cohomology of $\Spec(\kk)$, while \cite{redPow} and \cite{motEM} determine stable cohomology operations on mod 2 motivic cohomology.

\begin{thm}[{\cite{motCohom}}]\label{thm:motCohom}
Mod 2 motivic cohomology of $\Spec(\kk)$ takes the form
\[
	H^\star(\Spec(\kk);\ZZ/2) = k^M_*(\kk)[\tau]
\]
where $|k^M_1(\kk)| = \alpha$ and $|\tau| = -1+\alpha$.
\qed
\end{thm}

\begin{thm}[\cite{redPow,motEM}]\label{thm:Steen}
The \emph{motivic Steenrod algebra} is the algebra of stable operations on $H$,
\[
	\calA^\star \coleq H^\star H.
\]
The motivic Steenrod algebra is generated by $\beta$ and $P^i$, $i\ge 0$.
\qed
\end{thm}

$\calA^\star$ has the structure of a Hopf algebroid over $H^\star$.  (See \cite[Appendix A1]{Rav} for the theory of Hopf algebroids.)  In this paper, we will be more concerned with the dual to the motivic Steenrod algebra, $\calA_\star = H_\star H$ which is a Hopf algebroid over $H_\star = H^{-\star}$.

\begin{thm}[{\cite{redPow,motEM}}]\label{thm:dualSteen}
The dual motivic Steenrod algebra is isomorphic to
\[
	H_\star[\tau_0,\tau_1,\ldots,\xi_1,\xi_2,\ldots]/(\tau_i^2 - \tau\xi_{i+1} - \rho(\tau_{i+1} + \tau_0\xi_{i+1})).
\]
Here $\tau$ is the generator of $H_{1-\alpha}= H^0(\Spec(\kk);\ZZ/2(1))$, $\rho$ is the class of $-1$ in $H_{-\alpha} = k^M_1(\kk) = \kk^\times/(\kk^\times)^2$, $|\tau_i| = (2^i-1)(1+\alpha)+1$, and $|\xi_i| = (2^i-1)(1+\alpha)$.

The Hopf algebroid structure on $\calA_\star$ is specified by the following: elements of $H_{0+*\alpha} = k^M_*(\kk)$ are primitive, and
\begin{equation}\label{eqn:HopfStr}
\begin{aligned}
	\eta_L \tau	&= \tau\\
	\eta_R \tau	&= \tau + \rho\tau_{0}\\
	\Delta \xi_{k}	&= \sum_{i=0}^{k}\xi_{k-i}^{2^{i}}\tensor \xi_{i}\\
	\Delta \tau_{k}	&= \tau_{k}\tensor 1 + \sum_{i=0}^{k}\xi_{k-i}^{2^{i}}\tensor \tau_{i}.\qed
\end{aligned}
\end{equation}
\end{thm}

\begin{rmk}\label{rmk:MilnorBasis}
It follows that $\calA^\star$ has a Milnor basis of elements of the form $Q_I(r_1,\ldots,r_n)$ as in topology with the degree shift $|Q_n| = 2^n(1+\alpha)-\alpha$; see \cite{Borg}.
\end{rmk}

Certain quotient Hopf algebroids of $\calA_\star$ will be useful in our analysis (see \S\ref{sec:comod}).  The following definition is due to Mike Hill.

\begin{defn}[{\cite{ExtHill}}]\label{defn:En}
Let $\mathcal{E}(n)$, $0\le n < \infty$, denote the quotient Hopf algebroid
\[\begin{aligned}
	\mathcal{E}(n)	&\coleq \mathcal{A}_\star/(\xi_1,\xi_2,\ldots,\tau_{n+1},\tau_{n+2},\ldots)\\
			&= H_\star[\tau_0,\ldots,\tau_n]/(\tau_i^2-\rho\tau_{i+1}\mid 0\le i<n)+(\tau_n^2).
\end{aligned}\]
If $n=\infty$, let
\[\begin{aligned}
	\mathcal{E}(\infty)	&\coleq \mathcal{A}_\star/(\xi_1,\xi_2,\ldots)\\
				&= H_\star[\tau_0,\tau_1,\ldots]/(\tau_i^2-\rho\tau_{i+1}\mid 0\le i).
\end{aligned}\]
\end{defn}

The Hopf algebroid $\calE(n)$ is dual to the sub-Hopf algebroid of $\calA^\star$ generated by the Milnor elements $Q_i$, $i\le n$.

\subsection*{The motivic Adams spectral sequence}
Our primary means of computation is the motivic Adams spectral sequence (mASS).  This spectral sequence first appeared in Morel's work on connectivity and stable motivic $\pi_0$ \cite{MorelConn,MorelPi0} and has since been used over algebraically closed fields by Hu-Kriz-Ormsby \cite{HKO} (more accurately, their work focuses on the application of the motivic Adams-Novikov spectral sequence) and, independently, Dugger-Isaksen \cite{DI}.  Hopkins-Morel (unpublished) knew that the motivic Adams spectral sequence (at the prime 2) converges to $(2,\eta)$-completions and Hu-Kriz-Ormsby \cite{KOAdams} both prove this result and show that $\eta$-completion is unnecessary when $cd_2(\kk[i])<\infty$.  This condition holds for $\kk=F$ a $p$-adic field, so we have the following:

\begin{thm}[\cite{KOAdams}]\label{thm:conv}
Fix a $p$-adic field $F$ (see \S\ref{sec:pAdic}) and let $X$ be a cell spectrum of finite type.  Then the $E_2$-term of the mASS is
\[
	E_2^{*,\star} = \Ext_{\calA_\star}(H_\star,H_\star X)
\]
and the mASS converges to $\pi_\star X\comp$ where permanent cycles in tri-degree $(s,m+n\alpha)$ represent elements of $\pi_{m+n\alpha-s}X\comp$.
\qed
\end{thm}

A word on the grading of the mASS will make computations easier to follow:  The mASS is tri-graded.  We denote the $r$-th page of the mASS by $E_r^{*,\star}$ where the first $*$ is an integer called the \emph{homological degree}, and the second $\star$ is a bigrading of the form $m+n\alpha$ called the \emph{motivic degree}.  For a tri-grading $(s,m+n\alpha)$, we call the bigrading $m+n\alpha - s = (m-s)+n\alpha$ the \emph{total motivic degree} or \emph{Adams grading}; sometimes Adams grading will also refer to the tri-degree $(s,m+n\alpha-s)$.  The differentials in the mASS take the form
\[
	d_r:E_r^{s,m+n\alpha}\to E_r^{s+r,m+n\alpha+r-1}.
\]
In other words, the $r$-th differential increases homological degree by $r$ and decreases total motivic degree by 1.

\begin{rmk}
This paper concerns itself with mASS computations of the bigraded coefficients of the 2-complete $\BPGL\langle n\rangle$ over $p$-adic fields.  Its sequel \cite{motAlpha} analyzes the motivic ANSS over $p$-adic fields, in particular a motivic analogue of the alpha family in that setting.
\end{rmk}

\subsection*{Acknowledgments}
This paper represents the first half of my thesis and it is a genuine pleasure to thank my advisor, Igor Kriz, for his input and help.  I would also like to thank Mike Hill and Paul Arne {\O}stv{\ae}r for their encouragement and interest over the summer of 2009.  Finally, I would like to thank the anonymous referee for numerous stylistic improvements, a correction to the proof of Theorem \ref{thm:cohomkgl}, a strengthening of Proposition \ref{prop:relns}, and a streamlined method of proof for Theorem \ref{thm:BPGL} that avoided dependence on $K$-theory computations.

\section{Arithmetic input from $p$-adic fields}
\label{sec:pAdic}
A \emph{$p$-adic field} is a complete discrete valuation field of characteristic 0 with finite characteristic $p$ residue field.  It is well-known that every $p$-adic field is a finite extension of the $p$-adic rationals $\QQ_p$.  A good reference for the basic theory is \cite{Cassels}.

Let $v:F\to \ZZ\cup\infty$ denote the valuation on $F$.  $F$ has a \emph{ring of integers} $\calO \coleq \braces{x\in F\mid v(x)\ge 0}$.  $\calO$ is a domain with $F = \Frac \calO$, the field of fractions of $\calO$.  Moreover, $\calO$ is a local ring with maximal ideal $\frakm \coleq \braces{x\in F\mid v(x)\ge 1}$.  A \emph{uniformizer} of $F$ is an element $\pi\in F$ such that $v(\pi) = 1$; note that for any choice of uniformizer $\pi$, $(\pi) = \frakm$.

The \emph{residue field} of $F$ is
\[
	\FF \coleq \calO/\frakm.
\]
Let $q = p^m = |\FF|$ denote the \emph{residue order} of $F$.

As a consequence of Hensel's lemma (see, e.g., \cite[Lemma 3.1]{Cassels}), the units of a $p$-adic field $F$ are equipped with a \emph{Teichm\"{u}ller lift} $\FF^\times\hookrightarrow F^\times$.  Identifying $\FF^\times$ with its image in $F^\times$, we have
\[
	F^\times = \pi^\ZZ \times \FF^\times \times (1+\frakm).
\]

\begin{cor}\label{cor:sqCl}
Let $F$ be a $p$-adic field, $p>2$, with chosen uniformizer $\pi$ and choose $u$ to be a nonsquare in the Teichm\"{u}ller lift $\FF^\times$.  Then
\[
	F^\times/(F^\times)^2 = \pi^{\ZZ/2} \times u^{\ZZ/2}.
\]
When $q = |\FF^\times| \equiv 3\pod{4}$, we may choose $u$ to be $-1$; when $q\equiv 1\pod{4}$, the image of $-1$ in $F^\times/(F^\times)^2$ is zero.  (If $p=2$, then $(1+\frakm)/(1+\frakm)^2 \ne 0$.)
\qed
\end{cor}

\begin{rmk}\label{conv}
The structure of $p$-adic fields differs in the cases $p=2$ and $p>2$:  for instance, $|\QQ_2^\times/(\QQ_2^\times)^2| = 8$ while $p$-adic fields have $|F^\times/(F^\times)^2|=4$ for every $p>2$.  In order to avoid a great many minor modifications, we will only deal with $p$-adic fields for which $p>2$ in this paper.  Henceforth, the term $p$-adic field will only refer to \emph{nondyadic} $p$-adic fields; moreover, the letter $F$ will always refer to a $p$-adic field unless stated otherwise.
\end{rmk}

Every discretely valued field $(E,v)$ with residue field $\EE$ comes equipped with a \emph{tame symbol}
\[
	\left(\frac{\cdot,\cdot}{E}\right):E^\times\times E^\times\to \EE^\times
\]
defined by the formula
\[
	\left(\frac{x,y}{E}\right) = (-1)^{v(x)v(y)} x^{v(y)} y^{-v(x)}\mod \frakm.
\]

\begin{lemma}[{\cite[Lemma 11.5]{MilnorK}}]\label{lem:tame}
The tame symbol is a Steinberg symbol and hence induces a homomorphism $K^M_2(E)\to \EE^\times = K^M_1(\EE)$.
\qed
\end{lemma}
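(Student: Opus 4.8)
The plan is to first replace the tame symbol by an equivalent closed form that makes both defining properties of a symbol transparent. Fix a uniformizer $\pi$ of $(E,v)$ and write each $x\in E^\times$ as $x=\pi^{v(x)}u_x$ with $v(u_x)=0$. Substituting into the formula for $\left(\frac{x,y}{E}\right)$, the powers of $\pi$ cancel and one obtains
\[
	\left(\frac{x,y}{E}\right)=(-1)^{v(x)v(y)}\,\overline{u_x}^{\,v(y)}\,\overline{u_y}^{\,-v(x)}\in\EE^\times,
\]
where $\overline{(\,\cdot\,)}$ denotes reduction to the residue field $\EE$. In particular the right-hand side is manifestly independent of the choice of $\pi$, and from here the whole lemma reduces to elementary manipulations in the abelian group $\EE^\times$.

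With this closed form in hand, bilinearity in the first variable is immediate: if $x_i=\pi^{a_i}u_i$ for $i=1,2$ then $x_1x_2=\pi^{a_1+a_2}u_1u_2$, and additivity of $v$ together with multiplicativity of reduction give $\left(\frac{x_1x_2,y}{E}\right)=\left(\frac{x_1,y}{E}\right)\left(\frac{x_2,y}{E}\right)$. Interchanging $x$ and $y$ in the closed form shows $\left(\frac{x,y}{E}\right)\left(\frac{y,x}{E}\right)=1$, so bilinearity in the second variable follows formally from that in the first.

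The real content is the Steinberg relation $\left(\frac{x,1-x}{E}\right)=1$ for $x\ne 0,1$, which I would prove by cases on $v(x)$. If $v(x)>0$ then $v(1-x)=0$ and $\overline{1-x}=1$; if $v(x)=0<v(1-x)$ then symmetrically $\overline{x}=1$; and if $v(x)=v(1-x)=0$ then both exponents occurring in the closed form vanish. In each of these three cases the symbol collapses to a power of $1\in\EE^\times$. The one case that requires genuine care is $v(x)<0$: then $v(1-x)=v(x)$, say both equal $a<0$, and writing $x=\pi^a u_x$, $1-x=\pi^a u_{1-x}$, the identity $x+(1-x)=1$ gives $u_x+u_{1-x}=\pi^{-a}$, which has valuation $-a>0$, so $\overline{u_{1-x}}=-\overline{u_x}$. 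The symbol then evaluates to $(-1)^{a^2}\bigl(\overline{u_x}/\overline{u_{1-x}}\bigr)^a=(-1)^{a^2}(-1)^a=1$, using $a^2\equiv a\pmod{2}$. I expect this sign bookkeeping in the negative-valuation case to be essentially the only subtle point.

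Once bilinearity and the Steinberg relation are verified, the tame symbol is by definition a Steinberg symbol on $E^\times$, and hence factors through the universal one: $K^M_2(E)$ is the quotient of $E^\times\tensor_\ZZ E^\times$ by the subgroup generated by the Steinberg elements $x\tensor(1-x)$, so the induced map is the asserted homomorphism $K^M_2(E)\to\EE^\times=K^M_1(\EE)$.
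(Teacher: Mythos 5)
Your proof is correct and complete. Note, however, that the paper does not prove this lemma at all --- the \verb|\qed| immediately after the statement marks it as a cited fact, deferring entirely to Milnor's \emph{Introduction to Algebraic $K$-Theory}, Lemma~11.5. The argument you give (reduce to the closed form $(-1)^{v(x)v(y)}\overline{u_x}^{v(y)}\overline{u_y}^{-v(x)}$, verify bilinearity and antisymmetry formally, and check the Steinberg relation by cases on $v(x)$, with the only nontrivial bookkeeping in the case $v(x)<0$ where $u_x+u_{1-x}=\pi^{-v(x)}$ forces $\overline{u_{1-x}}=-\overline{u_x}$) is essentially Milnor's own, so your proposal reconstructs the content of the cited reference rather than diverging from it.
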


As a consequence of Lemma \ref{lem:tame} and \cite[Example 1.7]{MilnorQuad}, we can determine the mod 2 Milnor $K$-theory of a $p$-adic field, a result presumably well-known to those who study such objects.

\begin{prop}\label{prop:kMpAdic}
Fix a $p$-adic field $F$, a uniformizer $\pi$, and a nonsquare $u\in\FF^\times$.  As a $\ZZ$-graded $\ZZ/2$-algebra,
\begin{equation}\label{eqn:kMpAdic}
	k^M_*(F) =
	\begin{cases}
		\ZZ/2[\braces{u},\braces{\pi}]/(\braces{u}^2,\braces{\pi}^2)	&\text{if }q\equiv 1\pod{4},\\
		\ZZ/2[\braces{u},\braces{\pi}]/(\braces{u}^2,\braces{\pi}(\braces{u}-\braces{\pi}))	&\text{if }q\equiv 3\pod{4}
	\end{cases}
\end{equation}
where $|\braces{\pi}| = |\braces{u}| = 1$.
\end{prop}
\begin{proof}
Abusing notation, we will write $x$ for $\braces{x}\in K^M_1(F)$ or $k^M_1(F)$ whenever the context does not admit confusion.

Since $k^M_1(F)=F^\times/(F^\times)^2$, Corollary \ref{cor:sqCl} implies that
\[
	k^M_1(F) = \pi^{\ZZ/2}\times u^{\ZZ/2}.
\]
Moreover, in \cite[Example 1.7(2)]{MilnorQuad} Milnor shows that $k^M_2(F)$ has dimension 1 as a $\ZZ/2$-vector space.  By the same reference, $k^M_n(F) = 0$ for all $n\ge 3$.

We still must determine the multiplicative structure of $k^M_*(F)$, which amounts to determining the products $u^2, u \pi, \pi^2\in k^M_2(F)$.  First note that
\[
	\left(\frac{u, \pi}{F}\right) = (-1)^0 u^1 \pi^0 = u\in K^M_1(\FF),
\]
which reduces to the nontrivial generator $u$ of $k^M_1(\FF)$.  By Lemma \ref{lem:tame}, it follows that $u\pi\ne 0\in k^M_2(F)$.

The argument above also proves that, after reduction mod 2, the tame symbol is an isomorphism $k^M_2(F)\to k^M_1(\FF)$.  Hence to compute the products $u^2$ and $\pi^2$, it suffices to compute
\[
	\left(\frac{u,u}{F}\right)\quad\text{and}\quad \left(\frac{\pi,\pi}{F}\right).
\]
These symbols are $1$ and $-1$, respectively, so $u^2 = 0\in k^M_1(F)$ while $\pi^2$ is nontrivial iff $q\equiv 3\pod{4}$.  This determines the multiplicative structure given in (\ref{eqn:kMpAdic}).
\end{proof}

\begin{thm}\label{thm:pAdicDualSteenrod}
Over a $p$-adic field $F$, the coefficients of mod 2 motivic homology are
\[
	H_\star = k^M_*(F)[\tau]
\]
where $|\tau| = 1-\alpha$, $|k^M_n(F)| = -n\alpha$, and $k^M_*(F)$ has the form given in Proposition \ref{prop:kMpAdic}.

The dual motivic Steenrod algebra has the form
\[
	\calA_\star = H_\star[\tau_0,\tau_1,\ldots,\xi_1,\xi_2,\ldots]/(\tau_i^2 - \tau\xi_{i+1} - \rho(\tau_{i+1} + \tau_0\xi_{i+1})).
\]

The class $\rho$ is trivial iff $q\equiv 1\pod{4}$.  In this case,
\[
	\calA_\star = H_\star[\tau_0,\tau_1,\ldots,\xi_1,\xi_2,\ldots]/(\tau_i^2 - \tau\xi_{i+1}) \cong \calA^\CC_\star\tensor_{H^\CC_\star} k^M_*(F)
\]
where $(H^\CC_\star,\calA^\CC_\star)$ is the dual motivic Steenrod algebra over $\CC$, which has the structure
\[\begin{aligned}
	H^\CC_\star	&= \ZZ/2[\tau],\\
	\calA^\CC_\star	&= \ZZ/2[\tau,\tau_0,\tau_1,\ldots,\xi_1,\xi_2,\ldots]/(\tau_i^2 - \tau\xi_{i+1}).
\end{aligned}\]
\end{thm}
\begin{proof}
Most of the theorem is a concatenation of results in Theorems \ref{thm:motCohom} and \ref{thm:dualSteen} and Proposition \ref{prop:kMpAdic}.  The form of $(H^\CC_*,\calA^\CC_*)$ is obvious after noting that $k^M_*(\CC)$ is trivial outside of degree 0.  The class $\rho$ is trivial iff $-1$ is a square in $\FF^\times = \FF_q^\times$; it is standard that this is the case iff $q\equiv 1\pod{4}$.
\end{proof}

The structure of $H_\star$ over $F$ is depicted in Figure \ref{fig:HF}.  Here the horizontal axis measures the $\ZZ$-component of the motivic bigrading, while the vertical axis measures the $\ZZ\alpha$-component.  Each ``diamond" shape is a copy of $k^M_*(F)$, and the diagonal arrows of slope $-1$ represent $\tau$-multiplication.

\begin{figure}
\centering
\includegraphics[width=2in]{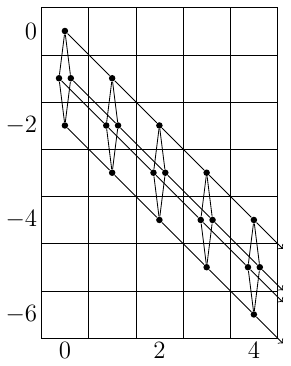}
\caption{Mod 2 motivic homology over $F$}\label{fig:HF}
\end{figure}

\section{Comodoules over the dual motivic Steenrod algebra}
\label{sec:comod}
In this section, we work over a general characteristic 0 field $\kk$.  Recall the algebraic Brown-Peterson spectrum $\BPGL$ constructed by Hu-Kriz and Vezzosi in \cite{realAlgCobord,Vez}.  (We only consider $\BPGL$ at the prime 2 in this paper.)  There are canonical elements $v_1,v_2,\ldots\in \BPGL_\star$ that appear in dimensions $|v_i| = (2^{i}-1)(1+\alpha)$.  Let $v_0=2\in \BPGL_0$.  These elements are the images of $v_i\in \BP_{2(2^i-1)}$ under the Lazard ring isomorphism $\MU_*\to \MGL_{*(1+\alpha)}$.

\begin{defn}
\label{defn:BPGLn}
For $0\le n$, the \emph{algebraic Johnson-Wilson spectra} are defined to be
\[
\BPGL\langle n\rangle \coleq \BPGL/(v_{n+1},v_{n+2},\ldots).
\]
\end{defn}

These quotients are well-defined since algebraic cobordism is $E_\infty$.  They fit into cofiber sequences
\begin{equation}\label{eqn:cofib}
	\Sigma^{|v_n|}\BPGL\langle n\rangle\xrightarrow{v_n} \BPGL\langle n\rangle\to \BPGL\langle n-1\rangle.
\end{equation}
By convention, we write $\BPGL\langle \infty\rangle = \BPGL$.

The study of the algebraic Johnson-Wilson spectra should be motivated by the natural role they play when $n=0,1$, and $\infty$.

\begin{thm}[Hopkins-Morel]\label{thm:BPGL0}
After 2-completion, $\BPGL\langle 0\rangle$ is the 2-complete integral motivic cohomology spectrum,
\[
	\BPGL\langle 0\rangle\comp = H\ZZ_{2}.
\]
\end{thm}

Hopkins defined $\BPGL\langle 1\rangle$ as a motivic analogue of connective $K$-theory, and we will sometimes write $\kgl$ instead of $\BPGL\langle 1\rangle$.  Throughout the rest of this section, let $\KGL$ denote the $2$-localization of the $(1+\alpha)$-periodic algebraic $K$-theory spectrum.  The following theorem relates the coefficients of $\kgl$ with established objects of interest, the algebraic $K$-groups of the ground field.

\begin{thm}\label{thm:v1tors}
Let ${}_{v_1}\kgl_\star$ denote the $v_1$-power torsion in the coefficients of $\kgl$, i.e., the elements $x\in \kgl_\star$ such that there exists $n\in \NN$ such that $v_1^nx=0\in \kgl_\star$.  (We will refer to these elements simply as $v_1$-torsion.)  Then there is an exact sequence
\[
	0\to {}_{v_1}\kgl_\star \to \kgl_\star\to \KGL_\star.
\]
Moreover, if $\overline{\KGL_\star}$ denotes the subalgebra of $\KGL_\star$ consisting of elements in degree $m+n\alpha$, $m\ge 0$, then there is a short exact sequence
\[
	0\to {}_{v_1}\kgl_\star\to \kgl_\star\to \overline{\KGL_\star}\to 0.
\]
\end{thm}
\begin{proof}
By the motivic Conner-Floyd theorem \cite{OS}, $\KGL_\star = v_1^{-1}\kgl_\star$.  The first exact sequence is then a basic fact of localization.

Clearly, though, the map $\kgl_\star\to \KGL_\star$ is not surjective since $\KGL_\star$ is $\text{Bott} = v_1$-periodic.  Note, though, that $\KGL_\star$ is generated by $v_1$ of dimension $1+\alpha$ and elements of degree $m+n\alpha$, $m\ge 0$.  (In fact, we could restrict the second collection of generators to degrees $0+n\alpha$, $n\le 0$.)  Again by the motivic Conner-Floyd theorem, it is a straightforward combinatorial check that $\kgl_\star\to \KGL_\star$ is surjective in dimensions $m+n\alpha$, $m\ge 0$.
\end{proof}

\begin{rmk}\label{rmk:algK}
The spectrum $\kgl$ is connective in the sense that $\kgl_{m+n\alpha} = 0$ for all $m<0$.  In general (though, we will see, not for $p$-adic fields) there is a rich class of $v_1$-torsion in $\kgl_\star$, so it is the case that $\kgl_\star$ is bigger than $\KGL_\star$ in its nonvanishing dimensional range.  Still, producing computations of $\kgl_\star$ explicit enough to capture its $v_1$-torsion will determine $\KGL_\star$ in a meaningful dimensional range by the second exact sequence.  In particular,
\[
	(\kgl_\star/{}_{v_1}\kgl_\star)_{m+0\alpha} = \KGL_{m+0\alpha}
\]
for $m\ge 0$, and these groups match the 2-local Quillen $K$-groups of the base field.
\end{rmk}

We now turn to determining the $\calA_\star$-comodule structure of $H_\star\BPGL\langle n\rangle$.  To access these, we will determine the $\calA^\star$-module structure of $H^\star\BPGL\langle n\rangle$.

Recall the Milnor elements $Q_i\in \calA^\star$, $|Q_i| = 2^i(1+\alpha)-\alpha$ from \S\ref{sec:intro}.  The following theorem of Borghesi should appear quite familiar to topologists.

\begin{thm}[{\cite[Proposition 6]{Borg}}]\label{thm:cohomMGL}
The mod 2 motivic cohomology of $\MGL$ takes the form
\[
	H^\star\MGL = (\calA^\star//E(Q_0,Q_1,\ldots))[m_i\mid i\ne 2^n-1]
\]
as an $\calA^\star$-module where $|m_i| = i(1+\alpha)$.
\qed
\end{thm}

\begin{cor}\label{cor:cohomBPGL}
The mod 2 motivic cohomology of $\BPGL$ takes the form
\[
	H^\star\BPGL = \calA^\star//E(Q_0,Q_1,\ldots)
\]
as an $\calA^\star$-module.
\qed
\end{cor}

Recall Definition \ref{defn:En} which defines the $\calA_\star$-algebras $\calE(n)$, $0\le n\le \infty$.  In particular, we have
\begin{equation}\label{eqn:EooE1}
\begin{aligned}
	\calE(\infty)	&= \calA_\star/(\xi_1,\xi_2,\ldots)\\
			&= H_\star[\tau_0,\tau_1,\ldots]/(\tau_i^2-\rho\tau_{i+1}\mid 0\le i),\\
	\calE(n)	&= \calA_\star/(\xi_1,\xi_2,\ldots,\tau_{n+1},\tau_{n+2},\ldots)\\
			&= H_\star[\tau_0,\ldots,\tau_n]/(\tau_i^2 - \rho\tau_{i+1}\mid 0\le i\le n)+(\tau_n^2).
\end{aligned}
\end{equation}
These algebras are dual to $E(Q_0,Q_1,\ldots)$ and $E(Q_0,\ldots,Q_n)$, respectively.

There is a general yoga of passing from $\calA^\star$-module structure on cohomologies to $\calA_\star$-comodule structure on homologies.  Applied to the above situation, we get the following theorem describing the $\calA_\star$-comodule structure on $H_\star\BPGL$.

\begin{thm}\label{thm:HBPGL}
As an $\mathcal{A}_\star$-comodule algebra,
\[
	H_\star\BPGL = \mathcal{A}_\star\cotensor_{\mathcal{E}(\infty)} H_\star.
\]
\qed
\end{thm}

To determine the $\calA_\star$-comodule structure of $H_\star\BPGL\langle n\rangle$ we first determine $H^\star\BPGL\langle n\rangle$ as an $\calA^\star$-module and then apply the same yoga.  Our determination of $H^\star\BPGL\langle n\rangle$ is modeled on the topological calculation \cite{Wilson}.  (Since the first draft of this paper was written, a similar argument for the cohomology of $\kgl$ has appeared in Isaksen-Shkembi \cite[\S5]{IsakShk}.)

\begin{thm}\label{thm:cohomkgl}
As an $\calA^\star$-module algebra,
\[
	H^\star\BPGL\langle n\rangle = \calA^\star//E(Q_0,\ldots,Q_n).
\]
\end{thm}
\begin{proof}
We use the cofiber sequence (\ref{eqn:cofib}) and induction on $n$.  By Theorem \ref{thm:BPGL0}, we know the Theorem holds for $\BPGL\langle 0\rangle$.  Assume it holds for some $n-1\ge 0$ and consider the long exact sequence in cohomology induced by (\ref{eqn:cofib}).  Following the exact argument of \cite[Proposition 1.7]{Wilson}, it suffices to show that $Q_n(1)=0\in H^{2^n(1+\alpha)-\alpha}\BPGL\langle n\rangle$.  To this end, note that $\BPGL\langle n\rangle$ is constructed from $\BPGL$ by killing off spheres of the form $S^{k+\ell\alpha}$ where $k,\ell\ge 2^{n+1}-1$.  Invoking Morel's connectivity theorem and the long exact sequence in homotopy induced by $\BPGL\to\BPGL\langle n\rangle$, we see that this map induces an iso in degrees $m+*\alpha$, $m+1<2^{n+1}$.  The same holds in cohomology, so Corollary \ref{cor:cohomBPGL} implies $Q_n$ dies in $H^\star\BPGL\langle n\rangle$ since $2^n+1<2^{n+1}$.
\end{proof}

Since $\calE(n)$ is dual to $E(Q_0,\ldots,Q_n)$, we have the following theorem.

\begin{thm}\label{thm:Hkgl}
As an $\mathcal{A}_\star$-comodule algebra,
\[
	H_\star\BPGL\langle n\rangle = \mathcal{A}_\star\cotensor_{\mathcal{E}(n)} H_\star.
\]
\qed
\end{thm}

\section{Motivic $\Ext$-algebras}
\label{sec:Ext}
Theorems \ref{thm:HBPGL} and \ref{thm:Hkgl} identify the homology of $\BPGL\langle n\rangle$, $0\le n\le \infty$, in the category of $\mathcal{A}_\star$-comodules.  By Theorem \ref{thm:conv}, these data form the input to the $E_2$-term of the mASS for $\BPGL\langle n\rangle$.  In fact, both $E_2$-terms take the form
\[
	\Ext_{\mathcal{A}_\star}(H_\star,\mathcal{A}_\star\cotensor_{\mathcal{E}(n)}H_\star).
\]

\begin{thm}[{\cite[Theorem A1.3.12]{Rav}}]\label{thm:changeRings}
For $0\le n\le \infty$, the map of Hopf algebroids $(H_\star,\calA_\star)\to (H_\star,\calE(n))$ induces an isomorphism
\[
	\Ext_{\calA_\star}(H_\star,\calA_\star\cotensor_{\calE(n)}H_\star)\cong \Ext_{\calE(n)}(H_\star,H_\star).\qed
\]
\end{thm}

Fix a $p$-adic field $F$ (see Remark \ref{conv}) with residue order $q$.  In this section, we compute $\Ext_{\mathcal{E}(n)}(H_\star,H_\star)$ over $F$; this is the $E_2$-term for the mASS computing $\pi_\star\BPGL\langle n\rangle\,\widehat{_2}$.  This work was antecedent to Hill's paper \cite{ExtHill} in which he performs similar computations over the field of real numbers $\RR$.

Recall that when $q\equiv 1\pod{4}$, Theorem \ref{thm:pAdicDualSteenrod} implies that $\Ext_{\mathcal{E}(n)}(H_\star,H_\star)$ is easily computable in terms of its complex counterpart $\Ext_{\mathcal{E}(n)^\CC}(H^\CC_\star,H^\CC_\star)$.  In fact, $(H^F_\star,\mathcal{E}(n)^F) = (H^\CC_\star,\mathcal{E}(n)^\CC) \tensor_{H^\CC_\star} H^F_\star$ as Hopf algebroids, so, by change of base,
\[
	\Ext_{\mathcal{E}(n)^F}(H^F_\star,H^F_\star) = \Ext_{\mathcal{E}(n)^\CC}(H^\CC_\star,H^\CC_\star) \tensor_{H^\CC_\star} H^F_\star
\]
when $q\equiv 1\pod{4}$.  Moreover, since $\rho = 0$ over $\CC$, $\mathcal{E}(n)^\CC = \mathcal{E}(n)^\top \tensor_{\ZZ/2} H^\CC_\star$.  Here $\mathcal{E}(n)^\top$ is the analogous quotient of the topological dual Steenrod algebra, but degree-shifted so that elements usually in degree $2m$ appear in dimension $m(1+\alpha)$.  Hence, again by change of base, we can compute the $E_2$-term of the mASS for $\BPGL\langle n\rangle^\CC$.  To be precise,
\[\begin{aligned}
	\Ext_{\calE(n)^\CC}(H^\CC_\star,H^\CC_\star)
	&= \Ext_{\calE(n)^{\top}}(H^{\top}_\star,H^{\top}_\star)\tensor_{\ZZ/2} H^\CC_\star\\
	&= \ZZ/2[v_0,\ldots,v_n]\tensor_{\ZZ/2} H^\CC_\star.
\end{aligned}\]
(See \cite[Corollary 3.1.10]{Rav} for the computation in topology.)

This yields, for $q\equiv 1\pod{4}$, the computation
\[
	\Ext_{\mathcal{E}(n)^F}(H^F_\star,H^F_\star) = H^F_\star[v_0,\ldots,v_n]
\]
where $|v_i| = (1,(2^i-1)(1+\alpha)+1)$.

When $q\equiv 3\pod{4}$, $\mathcal{E}(n)$ does not split over $\mathcal{E}(n)^\CC$.  In order to deal with the extra complexity introduced by the relation $\tau_i^2 = \rho\tau_{i+1}$, we filter by powers of $\rho$ and consider the associated filtration spectral sequence \cite[Theorem A1.3.9]{Rav}.  (In \cite{ExtHill}, Hill refers to this spectral sequence (over $\RR$) as the ``$\rho$-Bockstein spectral sequence.")  Since $\mathcal{E}(n)^F_\star/(\rho) = \mathcal{E}(n)^\CC_\star\oplus \pi \mathcal{E}(n)^\CC_\star$, this spectral (in fact, long exact) sequence takes the form
\[
	E_1 = \left(\begin{array}{c}\Ext_{\mathcal{E}(n)^\CC}(H^\CC_\star,H^\CC_\star)\\ \oplus\\ \pi\Ext_{\mathcal{E}(n)^\CC}(H^\CC_\star,H^\CC_\star)\end{array}\right)[\rho]/(\rho^2)\implies \Ext_{\mathcal{E}(n)^F}(H^F_\star,H^F_\star).
\]
Since $\eta_R(\tau)-\eta_L(\tau) = \rho\tau_0$ in $\mathcal{E}(n)^F$, $\tau$ supports the $d_1$-differential
\[
	d_1\tau = \rho v_0.
\]
The elements $\pi$ and $\rho$ are in the Hurewicz image of the sphere, and hence are permanent cycles; the $v_i$ are represented in the cobar complex by the primitives $[\tau_i]$ and hence are permanent cycles.  Thus we have determined the $E_2$ page of the filtration spectral sequence:
\[
	E_2 = \begin{array}{c}k^M_*(F)[\tau^2,v_0,\ldots,v_n]/(\rho v_0)\\ \oplus\\ \rho\tau k^M_*(F)[\tau^2,v_0,\ldots,v_n]\end{array}.
\]
Since $\rho^2 = 0$, the spectral sequence collapses here and $E_2=E_\infty$.

In order to fully determine $\Ext_{\mathcal{E}(n)}(H_\star,H_\star)$ when $q\equiv 3\pod{4}$, we must address hidden extensions in $E_2 = E_\infty$.

\begin{prop}\label{prop:relns}
There are no hidden extensions in the $\rho$-power filtration spectral sequence for $\Ext_{\mathcal{E}(n)}$.
\end{prop}
\begin{proof}
We only need to concern ourselves with the $q\equiv 3\pod{4}$ case.  There is only one extension to consider since $\rho^2=0$.  Elements in the $\rho$-divisible summand appear in lowest possible filtration and hence have their expected multiplicative structure.  Thus it suffices to show that the $v_i$s and $\tau^2$ are free; this accomplished by considering the change-of-base map
\[
	\Ext_{\mathcal{E}(n)^F}\to \Ext_{\mathcal{E}(n)^\CC}.\qedhere
\]
\end{proof}

This, combined with the filtration spectral sequence computation, proves the following theorem.

\begin{thm}\label{thm:ExtEoo}
Over a $p$-adic field $F$,
\[
	\Ext_{\mathcal{A}_\star}(H_\star,H_\star\BPGL\langle n\rangle) =
	\begin{cases}
		k^M_*(F)[\tau,v_0,\ldots,v_n]	&\text{if }q\equiv 1\pod{4},\\ \\
		\begin{array}{c}k^M_*(F)[\tau^2,v_0,\ldots,v_n]/(\rho v_0)\\ \oplus\\ \rho\tau k^M_*(F)[\tau^2,v_0,\ldots,v_n]\end{array}	&\text{if }q\equiv 3\pod{4}.
	\end{cases}
\]\qed
\end{thm}

\begin{rmk}
Note that by Proposition \ref{prop:kMpAdic}, the algebra $k^M_*(F)$ takes the form
\[
	k^M_*(F) =
	\begin{cases}
		\ZZ/2[\pi,u]/(u^2,\pi^2)		&\text{if }q\equiv 1\pod{4},\\
		\ZZ/2[\pi,\rho]/(\rho^2,\pi(\rho-\pi))	&\text{if }q\equiv 3\pod{4}
	\end{cases}
\]
so the above computation is completely explicit.
\end{rmk}

\section{$\pi_\star\BPGL\langle n\rangle\comp$ via the motivic Adams spectral sequence}
\label{sec:motASS}
Theorem \ref{thm:HBPGL} determines the $E_2$-term of the mASS for $\BPGL\langle n\rangle$.  We now determine the mASS for $\BPGL\langle 0\rangle$ and use this and the maps $\BPGL\langle n\rangle\to\BPGL\langle 0\rangle$ to compute the mASS for $\BPGL\langle n\rangle$, $0<n\le \infty$.

Let $q$ be the residue order of our $p$-adic field $F$, let $a = \nu_2(q-1)$, and let $\lambda = \nu_2(q^2-1)$ where $\nu_2$ is the 2-adic valuation of integers.  Define the numbers $w_i$ following \cite{RW}.  Then
\[
	w_i =
	\begin{cases}
		2^{a+\nu_2(i)}	&\text{if }q\equiv 1\pod{4},\\
		2^{\lambda-1+\nu_2(i)}	&\text{if }q\equiv 3\pod{4}, i\text{ even},\\
		2			&\text{if }q\equiv 3\pod{4}, i\text{ odd}.
	\end{cases}
\]
(Note that $a=\lambda-1$ when $q\equiv 1\pod{4}$.)  By Theorem \ref{thm:BPGL0}, the following lemma is a well-known computation in \'{e}tale cohomology (see, for instance, \cite[Corollary 2.10]{RW}, apply the universal coefficient theorem, and then recall the relationship between \'{e}tale and motivic cohomology of fields).
\begin{lemma}\label{lem:HZcoeffs}
The coefficients of $\BPGL\langle 0\rangle\comp$ are
\[\begin{aligned}
	\pi_{m+n\alpha}\BPGL&\langle 0\rangle\comp\\
	&= H^{-(m+n)}_{\mathrm{mot}}(F;\ZZ_2(-n))\\
	&=
	\begin{cases}
		\ZZ_2			&\text{if }m=n=0,\\
		\ZZ_2\oplus \ZZ/w_1	&\text{if }m=0,n=-1,\\
		\ZZ/w_1			&\text{if }m=0,n=-2,\\
		\ZZ/w_i			&\text{if }m+n\alpha = (i-1)(1-\alpha)-\epsilon\alpha\\
					&\text{ for }i\ge 1, \epsilon = 1\text{ or }2,\\
		0			&\text{otherwise}.
	\end{cases}
\end{aligned}\]
\qed
\end{lemma}

\begin{thm}\label{thm:HZdiffls}
The mASS for $\BPGL\langle 0\rangle$ is determined by the following differentials:  If $q\equiv 1\pod{4}$, then
\[
	d_{a+s}\tau^{2^s} = u\tau^{2^s-1}v_0^{a+s}.
\]
If $q\equiv 3\pod{4}$, then
\[\begin{aligned}
	d_1\tau				&= \rho v_0,\\
	d_{\lambda-1+s}\tau^{2^s}	&= \rho \tau^{2^s-1} v_0^{\lambda-1+s}.
\end{aligned}\]
\end{thm}
\begin{proof}
Given Lemma \ref{lem:HZcoeffs} this is straightforward:  differentials on $\tau$-powers determine the spectral sequence since $v_0$ and elements of $k^M_*(F)$ are obviously permanent cycles.  Now $v_0$ represents $1-\epsilon$ in general, but the $0$-th coefficient group of $\BPGL\langle 0\rangle\comp$ is $\ZZ_2$ so it represents $2$ in this case.  Hence the above differentials are necessary in order to produce the appropriate $2$-torsion in $\pi_\star\BPGL\langle 0\rangle\comp$.
\end{proof}

\begin{rmk}
Upon noticing that the mASS for $\BPGL\langle 0\rangle$ is the same thing as the $2$-Bockstein spectral sequence, we can also see the differentials of Theorem \ref{thm:HZdiffls} by May's higher Leibniz rule \cite[Proposition 6.8]{LNM168}.  This states that
\[
	d_{r+1} x^2 = xd_r(x)v_0
\]
in Bockstein spectral sequences up to a correction term expressed by an (algebraic) power operation on $d_1(x)$ in the $r=1$ case.  The first nontrivial differential is determined by the fact $\BPGL\langle 0\rangle_{-\alpha} = F^\times$ \cite{MorelPi0} and the structure of $F^\times$.  When $q\equiv 1\pod{4}$, we avoid the correction term and this is enough to determine the spectral sequence.  When $q\equiv 3\pod{4}$ the correction term is quite important and implies that $d_2(\tau^2) = 0$.  One then must determine $d_?(\tau^2)$ by knowing the torsion in $\pi_{1-2\alpha}\BPGL\langle 0\rangle\comp = H^1(F;\ZZ_2(2))$, at which point one may continue with the higher Leibniz argument.
\end{rmk}

\begin{rmk}\label{rmk:diGrading}
The behavior of this spectral sequence is depicted in Figure \ref{fig:BPGL0mASS}.  Here elements in degree $(s,m+n\alpha)$ are depicted in total motivic degree $m+n\alpha-s$ with the homological degree suppressed.  (The horizontal axis measures $\ZZ$ while the vertical axis measures $\ZZ\alpha$.)  It is convenient to imagine the supressed homological degree as coming out of the page, in which case there are $v_0$-towers sitting over every mark and differentials truncate these towers.  In the dimensional range shown, $E_{\lambda+2} = E_\infty$.  Note that the $v_0$-towers in all pictures actually come out of the page, as do the $v_i$-multiplication arrows.
\end{rmk}

\begin{figure}
\centering
\includegraphics{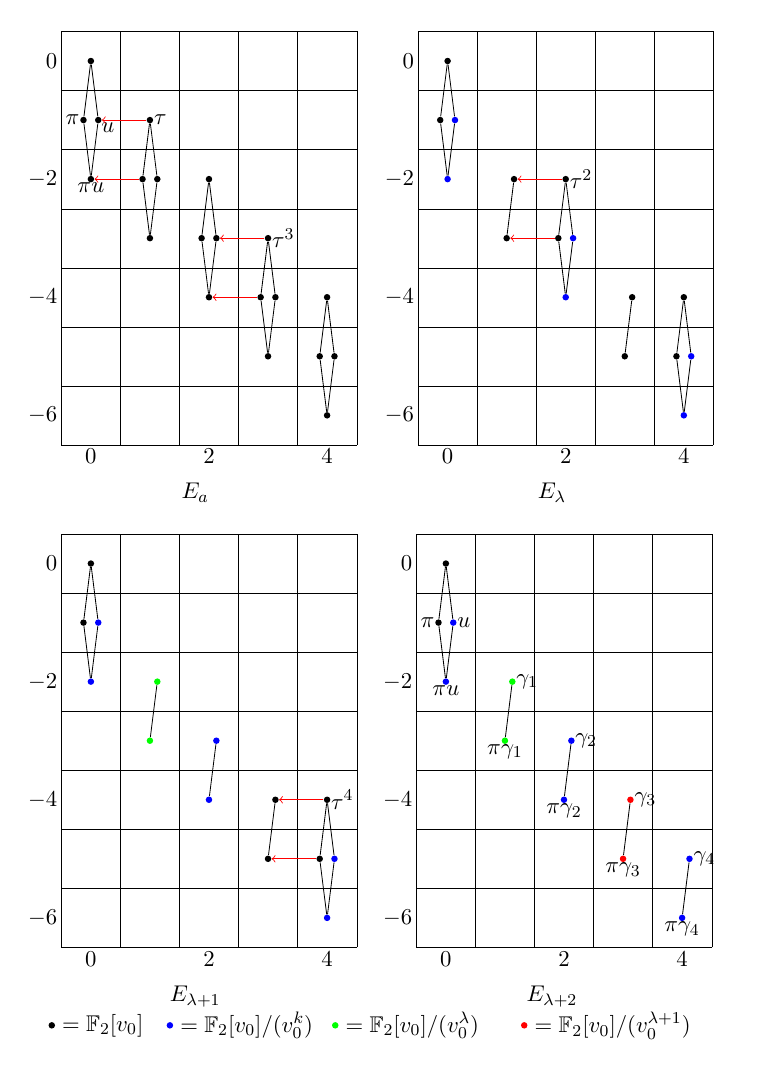}
\caption{The mASS for $\BPGL\langle 0\rangle$ over $F$}\label{fig:BPGL0mASS}
\end{figure}

We now use this information about the mASS for $\BPGL\langle 0\rangle$ to compute $\pi_\star\BPGL\langle n\rangle\comp$.  The surprising fact is that, for $i>0$, the differentials have no $v_i$-components so that $E_r(\BPGL\langle n\rangle) = E_r(\BPGL\langle 0\rangle)[v_1,\ldots,v_n]$.

\begin{thm}\label{thm:BPGLnDiffls}
The differentials in the mASS for $\BPGL\langle n\rangle$ are identical to those for $\BPGL\langle 0\rangle$ in Theorem \ref{thm:HZdiffls}.
\end{thm}
\begin{proof}
Note that all the $v_i$ and elements in $k^M_*(F)$ are permanent cycles.  (This is obvious since their target ranges are trivial; see Figure \ref{fig:BPGLmASS}.)  Inductively, then, we can show that
\[
	E_r(\BPGL\langle n\rangle) = E_r(\BPGL\langle 0\rangle)[v_1,\ldots,v_n].
\]
The result is true on $E_2$ pages; assume it is true on $E_r$ and consider the map $f$ of spectral sequences induced by $\BPGL\langle n\rangle\to \BPGL\langle 0\rangle$.  By dimensional accounting, it is clear that the $E_{r+1}$-page is determined by the differential on the smallest surviving $\tau$-power.  The map $f$ is an isomorphism on the target of this $\tau$-power by dimensional accounting.  Indeed, since $cd_2(F) = 2$ (i.e. $k^M_*(F)$ is only nonzero for $0\le *\le 2$) no $v_{>0}$-terms can appear in the target.  This proves that the $\tau$-power supports the same differential as in the mASS for $\BPGL\langle 0\rangle$, from which we get the result on $E_{r+1}$.
\end{proof}

\begin{rmk}
The mASS for $\BPGL\langle n\rangle$ is depicted in Figure \ref{fig:BPGLmASS}.  The grading convention described in Remark \ref{rmk:diGrading} is followed again.  Note that the arrows with slope 1 represent multiplication by $v_i$'s, $i>0$, and they also come ``out of the page" since $v_i$ has homological degree 1.  Note the obvious vanishing region with $k^M_*(F)$ and the $v_i$ monomials on the boundary.
\end{rmk}

\begin{figure}
\centering
\includegraphics{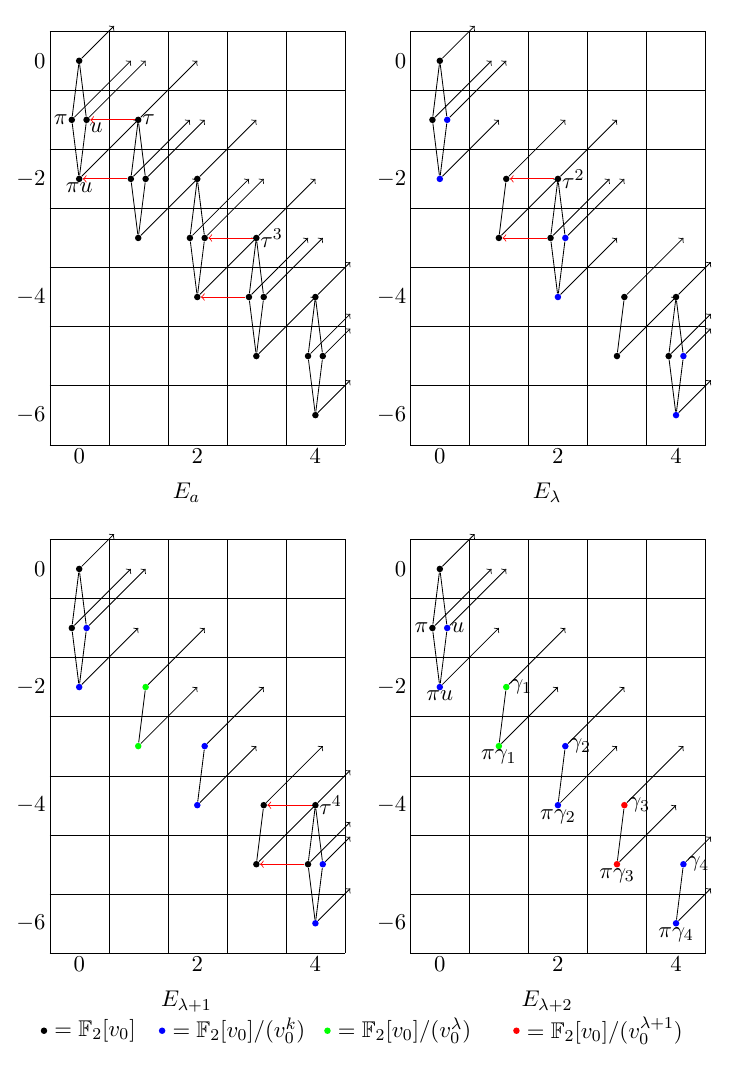}
\caption{The mASS for $\BPGL\langle n\rangle$ over $F$}\label{fig:BPGLmASS}
\end{figure}

We now have the following unified description of the $E_\infty$ page of the mASS for $\BPGL\langle n\rangle$, $0\le n\le \infty$.

\begin{thm}\label{thm:EooBPGL}
Let $a = \nu_2(q-1)$ and let $\lambda = \nu_2(q^2-1)$.  The $E_\infty$-term of the mASS for $\BPGL\langle n\rangle$ over a $p$-adic field $F$ is
\[
	\Gamma'[v_1,\ldots,v_n]
\]
where $\Gamma'$ has additive structure
\[
	\Gamma' =
	\begin{cases}
		\ZZ/2[v_0]				&\text{in dimension }0,\\
		\ZZ/2[v_0]\braces{\pi}\oplus \ZZ/2[v_0]/v_0^{a}\braces{u}	&\text{in dimension }-\alpha,\\
		\ZZ/2[v_0]/v_0^a\braces{\pi u}			&\text{in dimension }-2\alpha,\\
		\ZZ/2[v_0]/v_0^{\lambda-1+\nu_2(i)}\braces{\gamma_i}		&\text{in dimension }i(1-\alpha)-\alpha\\ &\text{for }i\text{ odd},\\
		\ZZ/2[v_0]/v_0^{\lambda-1+\nu_2(i)}\braces{\pi\gamma_i}	&\text{in dimension }i(1-\alpha)-2\alpha\\	&\text{for }i\text{ odd},\\
		\ZZ/2[v_0]/v_0^a\braces{\gamma_i}		&\text{in dimension }i(1-\alpha)-\alpha\\ &\text{for }i\text{ even},\\
		\ZZ/2[v_0]/v_0^a\braces{\pi\gamma_i}	&\text{in dimension }i(1-\alpha)-2\alpha\\ &\text{for }i\text{ odd},\\
		0			&\text{otherwise}.
	\end{cases}
\]
The generators in each degree are indicated above.  They satisfy the obvious multiplicative relations indicated by their notation while $u\gamma_i$ and $\gamma_i\gamma_j$ are 0.
\qed
\end{thm}

A quick inspection of tri-degrees reveals that there are no extensions except those created by $v_0$-multiplication.  Indeed, the lines of slope 1 originating in the nontrivial dimensions of $\Gamma'$ do not overlap, so we only need to worry about $v_0$-towers.  Since $v_0$ represents 2 in $\pi_0 \BPGL\langle n\rangle\comp$, any copies of $\ZZ/2[v_0]$ produce copies of the 2-adic integers $\ZZ_2$, and any copies of $\ZZ/2[v_0]/v_0^{b}$ produce copies of $\ZZ/2^{b}$.  This proves the following theorem.

\begin{thm}\label{thm:BPGL}
Let $a=\nu_2(q-1)$, $\lambda=\nu_2(q^2-1)$ and set $w_i = \ZZ/2^a$ for $i$ odd, $w_i = 2^{\lambda-1+\nu(i)}$ for $i$ even.  The coefficients of the 2-complete algebraic Johnson-Wilson spectra $\BPGL\langle n\rangle\,\widehat{_2}$ over a $p$-adic field $F$ are
\[
	\pi_\star \BPGL\langle n\rangle\,\widehat{_2} = (\pi_\star\BPGL\langle 0\rangle\comp)[v_1,v_2,\ldots]
\]
where $|v_i| = (2^i-1)(1+\alpha)$ and, additively,
\[
	\pi_\star\BPGL\langle 0\rangle\comp =
	\begin{cases}
		\ZZ_2			&\text{in dimension }0,\\
		\ZZ_2\oplus \ZZ/w_1	&\text{in dimension }-\alpha,\\
		\ZZ/w_1			&\text{in dimension }-2\alpha,\\
		\ZZ/w_i			&\text{in dimension }(i-1)(1-\alpha)-\epsilon\alpha\\
					&\text{ for }i\ge 1\text{ and }\epsilon = 1\text{ or }2,\\
		0			&\text{otherwise}.
	\end{cases}
\]
Multiplicative structure and generator names for $\pi_\star\BPGL\langle 0\rangle\comp$ are the same as in Theorem \ref{thm:EooBPGL}.
\qed
\end{thm}

\begin{cor}\label{cor:MGL}
The coefficients of the 2-complete algebraic cobordism spectrum over a $p$-adic field $F$ are
\[
	\pi_\star\MGL\,\widehat{_2} = \Gamma[x_1,x_2,\ldots]
\]
where $|x_i| = i(1+\alpha)$.
\qed
\end{cor}

\begin{cor}
Over a $p$-adic field, the slice spectral sequences for $\MGL\comp$ and $\BPGL\langle n\rangle\comp$ ($0\le n\le \infty$) collapse.
\end{cor}
\begin{proof}
Hopkins-Morel show that the slice associated gradeds for $\MGL$ and $\BPGL\langle n\rangle$ are $H\ZZ_\star[x_1,x_2,\ldots]$, $H\ZZ_\star[v_1,\ldots,v_n]$, respectively, and Theorem \ref{thm:BPGL} implies that there are no differentials in the slice spectral sequences for their 2-completions.
\end{proof}

\begin{cor}\label{cor:kgl}
The coefficients of 2-complete $\kgl$ are $(\pi_\star\BPGL\langle 0\rangle)[v_1]$.  In particular, there is no $v_1$-torsion and we recover the 2-complete algebraic $K$-theory of $F$ in degrees $m+0\alpha$.
\end{cor}

We conclude by discussing some easy corollaries of this work that highlight the importance of the algebraic Brown-Peterson spectra and have important applications to the motivic ANSS \cite{thesis, motAlpha}.  See \cite{HKO,KOAdams,DI} for discussions of the motivic Adams-Novikov spectral sequence.

For convenience, let $\Gamma \coleq \pi_\star\BPGL\langle 0\rangle\comp = \pi_\star H\ZZ_2$.  For typographical simplicity, we drop the 2-completion $(~)\,\widehat{_2}$ from our notation in the rest of this section.

\begin{thm}\label{thm:ANSSE2}
Fix a $p$-adic field $F$ and work in the 2-complete stable motivic homotopy category over $F$.  Then the Hopf algebroid for $\BPGL$ splits as
\[
	(\BPGL_\star, \BPGL_\star\BPGL) = (\BP_*,\BP_*\BP)\tensor_{\ZZ_2} \Gamma.
\]
Moreover, the $E_2$-term of the motivic ANSS in homological degree $s$ is
\[
	\Ext^s_{\BPGL_\star\BPGL}(\BPGL_\star,\BPGL_\star) = \begin{array}{c}{}^\top E_2^s\tensor_{\ZZ_2} \Gamma\\ \oplus\\ \Tor^{\ZZ_2}_1({}^\top E_2^{s+1}, \Gamma)\end{array}.
\]
Here ${}^{\top}E_2 = \Ext_{\BP_*\BP}(\BP_*,\BP_*)$ with degrees shifted so that elements appearing in degree $(s,2m)$ in topology appear in degree $(s,m(1+\alpha))$ motivically.
\end{thm}
\begin{proof}
The second statement is an easy consequence of the first via the cobar resolution computing $\Ext_{\BPGL_\star\BPGL}(\BPGL_\star,\BPGL_\star)$ and the universal coefficient theorem.

As a consequence of motivic Landweber exactness, Naumann-{\O}stv{\ae}r-Spitzweck \cite{MLE} deduce a splitting of the $\MGL$ Hopf algebroid as
\[
	(\MGL_\star,\MGL_\star\MGL) = (\MU_*,\MU_*\MU)\tensor_{\MU_*} \MGL_\star
\]
where $\MU_*$ is the coefficients of (topological) complex cobordism, the Lazard ring.  This splitting passes to $\BPGL$, so
\[
\begin{aligned}
	(\BPGL_*,\BPGL_*\BPGL)	&= (\BP_*,\BP_*\BP)\tensor_{\BP_*} \BPGL_\star\\
				&= (\BP_*,\BP_*\BP)\tensor_{\BP_*} (\BP_*\tensor_{\ZZ_2} \Gamma)\\
				&= (\BP_*,\BP_*\BP)\tensor_{\ZZ_2} \Gamma.
\end{aligned}
\]
\end{proof}

This description of the $E_2$-term of the motivic ANSS over $F$ already pays dividends in the form of a graded algebra with infinitely many nonzero components previously undiscovered in the stable stems $\pi_\star\mathbbm{1}\comp$ of the 2-complete sphere spectrum.

\begin{thm}\label{thm:GammaSurvives}
The algebra $\Gamma$ survives to $E_\infty$ of the motivic Adams-Novikov spectral sequence and represents a copy of $(H\ZZ_2)_\star$ in $\pi_\star\mathbbm{1}\comp$.
\end{thm}
\begin{proof}
The elements of $\Gamma = E_2^{0,0}\tensor \Gamma$ are in filtration 0 and hence are not the targets of differentials.  We must show that elements of $\Gamma$ do not support differentials.  For an element of degree $(s,m+n\alpha)$ in $E_2$, call $m+n-s$ the classical Adams degree.  Since $\Gamma$ is concentrated in classical Adams degrees $0, -1$, and $-2$, Theorem \ref{thm:ANSSE2} gives a vanishing line $E_2^{s,m+n\alpha} = 0$ for $s>m+n-s+2$.  Differentials in the motivic ANSS decrease classical Adams degree by 1 and increase homological degree by at least 2.  Since $\Gamma$ has $s=0$ and classical Adams degree $0,-1$, and $-2$, we see that it does not support differentials.
\end{proof}

\bibliography{biblio}

\end{document}